\theoremstyle{plain}
\newtheorem{lemma}{Lemma}[section]
\newtheorem{theorem}[lemma]{Theorem}
\theoremstyle{definition}
\def\prfname{{\it Proof of Theorem }}
\numberwithin{equation}{section} \thispagestyle{empty} \voffset
\begin{document}
	\baselineskip 15truept
	\title{Constructing edge-disjoint spanning trees in augmented cubes}
	\maketitle 
	\begin{center} 
		
		S. A. Mane\\  
		{\small Center for Advanced Studies in Mathematics,
			Department of Mathematics,\\ Savitribai Phule Pune University, Pune-411007, India.}\\
		\email{\emph{manesmruti@yahoo.com}} 
	\end{center} 
	\begin{abstract}
		Let $T_1, T_2,.......,T_k$ be spanning trees in a graph $G$. 
If for any pair of vertices $\{u,v\}$ of $G$, 
the paths between $u$ and $v$ in every $T_i$( $1\leq i\leq k$)
do not contain common edges then $T_1, T_2,.......,T_k$ 
are called edge-disjoint spanning trees in $G$. The design of multiple 
edge-disjoint spanning trees has applications
to the reliable communication protocols.
The $n-$dimensional augmented cube, denoted as $AQ_n$,
 a variation of the hypercube, possesses some properties superior to those of the hypercube.
  For $AQ_n$ ($n \geq 3$), construction of $n-1$ edge-disjoint spanning trees is given the result is
  optimal with respect to the number of edge-disjoint spanning trees.
\end{abstract}

 \noindent {\bf Keywords:}  edge-disjoint spanning trees, Augmented cubes 
\section{Introduction} 
     \indent A graph $G$ is a triple consisting of a vertex set $V(G)$, an edge set $E(G)$, and a
     relation that associates with each edge two vertices called its endpoints\cite{we}.
     The topology of the
     network is modeled in the form of a graph whose vertices correspond to nodes, while edges represent direct physical
     connections between nodes. This paper deals with the well-established problem of handling the maximum possible
     number of communication requests without using a single physical link more than once, known as the
     edge-disjoint spanning trees Problem.
     The hypercubes $Q_n$ are one of the most versatile and efficient interconnection
     networks discovered for parallel computation. Many variants of the hypercube
     have been proposed. The augmented cube, proposed by Choudam and Sunitha\cite{c}, 
     is one of such variations. An $n-$dimensional augmented cube $AQ_n$ can be formed as an
     extension of $Q_n$ by adding some links. For any positive integer $n$, $AQ_n$ is a vertex transitive, 
     $(2n-1)$-regular and  $(2n-1)$-connected graph with $2^n$ vertices. $AQ_n$ retains all favorable
     properties of $Q_n$ since $Q_n \subset AQ_n$. Moreover, $AQ_n$ possesses some embedding
     properties that $Q_n$ does not. The main merit of augmented cubes is that their diameters are 
     about half of those of the corresponding hypercubes. \\
     A tree $T$ is called a spanning tree of a graph $G$ if $V(T)= V(G)$.
     Two spanning trees $T_1$ and $T_2$ in $G$ are edge-disjoint if $E(T_1) \cap E(T_2)= \phi$.  \\
      The edge-disjoint spanning trees(EDSTs for short) problem has received a great deal of attention in 
      recent years because of its numerous applications on interconnection networks 
     such as fault-tolerant broad casting and secure message distribution\cite{f,j,h,t,l,n,w,x,y}.\\ 
    Barden, Hadas, Davis and Williams\cite{b} proved there exist $n$ EDSTs  
      in a hypercube of dimension $2n$ and provided a construction for obtaining the maximum number of EDSTs in a hyperube. 
    Wang, Shen and Fan\cite{wa} proved existence of $n-1$ EDSTs in an augmented cube $AQ_n$ and they asked,
    ``how to derive an effective algorithm that constructs edge-disjoint spanning trees based on our algorithm,
    in an augmented cube?'' 
     Motivated by this question, we provide the construction for obtaining the maximum number of EDSTs ($n-1$ EDSTs )
     in augmented cube $AQ_n$ ( $n \geq 3$ ).

        \section {Preliminaries}
        
 The definition of the $n-$dimensional augmented cube is stated as the following. Let $n \geq 1$ be 
 an integer. The $n$-dimensional augmented cube, denoted by $AQ_n$, is a graph with $2^n$ vertices, and each
  vertex $u$ can be distinctly labeled by an $n$-bit binary string, $u = u_1u_2....u_n$. $AQ_1$ is the graph $K_2$ with 
  vertex set $\{0, 1\}$. For
  $n \geq 2$, $AQ_n$ can be recursively constructed by two copies of $AQ_{n-1}$, denoted by $AQ^0_{n-1}$
  and $AQ^1_{n-1}$, and by adding
  $2^n$ edges between $AQ^0_{n-1}$ and $AQ^1_{n-1}$
  as follows:\\ Let $V(AQ^0_{n-1}) = \{0u_2....u_n : u_i \in \{0, 1\}, 2 \leq i \leq n\}$ and 
  $V(AQ^1_{n-1}) = \{1v_2....v_n : v_i \in \{0, 1\}, 2 \leq i \leq n\}$. A vertex $u = 0u_2....u_n$ of $AQ^0_{n-1}$
  is joined to a vertex $v = 1v_2....v_n $ of $AQ^1_{n-1}$
  if and only if for every $i$, $2 \leq i \leq n$ either\\
  1. $u_i = v_i$; in this case an edge $\langle u, v \rangle$ is called a hypercube edge and we say $v = u^h$, or\\
  2. $u_i = \overline{v_i}$; in this case an edge $\langle u, v \rangle$ is called a complement edge and we say $v = u^c$.\\
  Let $E^h_n = \{\langle u, u^h \rangle : u \in V(AQ^0_{n-1})\}$ and
  $E^c_n = \{\langle u, u^c \rangle : u \in V(AQ^0_{n-1})\}$. See Fig.$1$.\\
  \begin{center}
  	\unitlength 1mm 
  	\linethickness{0.4pt}
  	\ifx\plotpoint\undefined\newsavebox{\plotpoint}\fi 
  	\begin{picture}(294.151,50.401)(0,0)
  	\put(15,22.5){\line(0,1){18.25}}
  	\put(34.5,22.5){\line(0,1){18.25}}
  	\put(73.25,22.5){\line(0,1){18.25}}
  	\put(121.5,22.5){\line(0,1){18.25}}
  	\put(57.25,22.5){\line(0,1){18.25}}
  	\put(96,22.5){\line(0,1){18.25}}
  	\put(144.25,22.5){\line(0,1){18.25}}
  	\put(34.75,40.5){\line(1,0){22.5}}
  	\put(73.5,40.5){\line(1,0){22.5}}
  	\put(121.75,40.5){\line(1,0){22.5}}
  	\put(34.75,23){\line(1,0){22.25}}
  	\put(73.5,23){\line(1,0){22.25}}
  	\put(121.75,23){\line(1,0){22.25}}
  	\multiput(34.5,40.25)(.0439453125,-.0336914063){512}{\line(1,0){.0439453125}}
  	\multiput(73.25,40.25)(.0439453125,-.0336914063){512}{\line(1,0){.0439453125}}
  	\multiput(121.5,40.25)(.0439453125,-.0336914063){512}{\line(1,0){.0439453125}}
  	\multiput(34.75,23.25)(.0439453125,.0336914063){512}{\line(1,0){.0439453125}}
  	\multiput(73.5,23.25)(.0439453125,.0336914063){512}{\line(1,0){.0439453125}}
  	\multiput(121.75,23.25)(.0439453125,.0336914063){512}{\line(1,0){.0439453125}}
  	\put(96.25,40.25){\line(1,0){25.5}}
  	\put(96,23.5){\line(1,0){25.5}}
  	\multiput(73.5,40.25)(.097082495,-.0337022133){497}{\line(1,0){.097082495}}
  	\multiput(96,23)(.0929672447,.0337186898){519}{\line(1,0){.0929672447}}
  	\multiput(96,40.25)(.0942460317,-.0337301587){504}{\line(1,0){.0942460317}}
  	\multiput(121.5,40.25)(-.0920303605,-.0336812144){527}{\line(-1,0){.0920303605}}
  	\qbezier(73.25,40.5)(111,55.5)(143.75,40.5)
  	\qbezier(73.5,22.75)(105.375,8.625)(143.75,23)
  	\put(293.25,123.5){\circle*{1.803}}
  	\put(14.75,40.75){\circle*{1.581}}
  	\put(15,21.75){\circle*{1.581}}
  	\put(34.25,23){\circle*{1.581}}
  	\put(34.5,39.75){\circle*{1.581}}
  	\put(57.25,39.75){\circle*{1.581}}
  	\put(57.25,22.5){\circle*{1.581}}
  	\put(74,22.5){\circle*{1.581}}
  	\put(73.25,40.5){\circle*{1.581}}
  	\put(96.5,40.5){\circle*{1.581}}
  	\put(122,40.25){\circle*{1.581}}
  	\put(144.5,40){\circle*{1.581}}
  	\put(144,23.25){\circle*{1.581}}
  	\put(121.75,22.75){\circle*{1.581}}
  	\put(96,23.5){\circle*{1.581}}
  	\put(14.75,44.75){\makebox(0,0)[cc]{$\tiny{0}$}}
  	\put(14.75,17.25){\makebox(0,0)[cc]{$\tiny{1}$}}
  	\put(33,44.75){\makebox(0,0)[cc]{$\tiny{00}$}}
  	\put(57.75,46.5){\makebox(0,0)[cc]{$\tiny{10}$}}
  	\put(32.5,18.5){\makebox(0,0)[cc]{$\tiny{01}$}}
  	\put(57.75,18){\makebox(0,0)[cc]{$\tiny{11}$}}
  	\put(74,45.75){\makebox(0,0)[cc]{$\tiny{000}$}}
  	\put(73,18.75){\makebox(0,0)[cc]{$\tiny{010}$}}
  	\put(95,43.75){\makebox(0,0)[cc]{ $\tiny{001}$}}
  	\put(95,18.75){\makebox(0,0)[cc]{$\tiny{011}$}}
  	\put(121.5,44.25){\makebox(0,0)[cc]{$\tiny{101}$}}
  	\put(145.75,44.5){\makebox(0,0)[cc]{$\tiny{100}$}}
  	\put(145.75,18.5){\makebox(0,0)[cc]{$\tiny{110}$}}
  	\put(121.25,19.5){\makebox(0,0)[cc]{$\tiny{111}$}}
  	\put(14.75,13){\makebox(0,0)[cc]{$\tiny AQ_{1}$}}
  	\put(44.75,14){\makebox(0,0)[cc]{$\tiny AQ_{2}$}}
  	\put(106.25,11.75){\makebox(0,0)[cc]{$\tiny AQ_{3}$}}
  	\put(64,6.5){\makebox(0,0)[cc]{Fig.$1$}}
  	\put(59,31.25){\makebox(0,0)[cc]{$\tiny AQ^1_{1}$}}
  	\put(31,32.25){\makebox(0,0)[cc]{$\tiny AQ^0_{1}$}}
  	\put(77,31.5){\makebox(0,0)[cc]{$\tiny AQ^0_{2}$}}
  	\put(140.25,31.75){\makebox(0,0)[cc]{$\tiny AQ^1_{2}$}}
  	\end{picture}
  	
  \end{center}
  
   For undefined terminology and
   notations see \cite{we}.

     \section {Construction of edge-disjoint spanning trees in augmented cubes }
     
     Our proof is by induction. \\
     As $AQ_n$ is $(2n-1)$-regular, $ |E(AQ_n)|=(2n-1)(2^{n-1})= n2^n-2^{n-1}$. When we construct any spanning tree on $2^n$ vertices of $AQ_n$ we need exactly $2^n -1$ edges hence we can construct at most $n-1$ EDSTs. Still, for $n \geq 3$, $2^{n-1}+ n-1 (< 2^n -1)$ number of edges remain uncovered by these $n-1$ EDSTs, but by our method, we are able to construct the tree on $2^{n-1}+ n$ vertices containing uncovered edges.

 \begin{theorem}Let $n\geq 3$ be an integer. There exist $n-1$ edge-disjoint spanning trees in augmented cube $AQ_n$. 
 \end{theorem}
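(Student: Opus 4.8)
The plan is to proceed by induction on $n$, using the recursive decomposition $AQ_n = AQ^0_{n-1} \cup AQ^1_{n-1}$ together with the edge sets $E^h_n$ and $E^c_n$ of hypercube and complement edges joining the two copies. For the base case $n=3$ I would exhibit $2$ explicit edge-disjoint spanning trees of $AQ_3$ by listing their edges directly (and, as the remark before the theorem suggests, also record the auxiliary tree on $2^{n-1}+n$ vertices built from the leftover edges, since that stronger statement is what the induction will actually need to carry forward). For the inductive step, assume $AQ_{n-1}$ ($n-1 \geq 3$) admits $n-2$ edge-disjoint spanning trees $T_1,\dots,T_{n-2}$, and moreover — this is the strengthened hypothesis — that the edges of $AQ_{n-1}$ not used by these trees contain a tree $S$ on $2^{n-2}+(n-1)$ specified vertices.

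**Key steps.** First I would take two isomorphic copies of the $n-2$ trees, one in $AQ^0_{n-1}$ and one in $AQ^1_{n-1}$, and also two copies $S^0, S^1$ of the leftover tree $S$. Second, for each $i$ with $1\le i\le n-2$ I would splice the copy $T_i^0$ in $AQ^0_{n-1}$ to the copy $T_i^1$ in $AQ^1_{n-1}$ using exactly one carefully chosen cross edge from $E^h_n \cup E^c_n$, producing a spanning tree $\widehat{T}_i$ of $AQ_n$; the cross edges chosen for different $i$ must be distinct. Third, I would build the $(n-1)$st spanning tree $\widehat{T}_{n-1}$ of $AQ_n$ out of $S^0$, $S^1$, the remaining cross edges in $E^h_n\cup E^c_n$, and whatever edges of $AQ^0_{n-1},AQ^1_{n-1}$ were left unused; the counting identity $|E(AQ_n)| = (2n-1)2^{n-1}$ against $(n-1)(2^n-1)$ forces exactly $2^{n-1}+n-1$ edges to remain, and I must check these, plus the structure of $S^0\cup S^1$ plus one connecting cross edge, assemble into a connected acyclic spanning subgraph — and simultaneously re-identify the new leftover tree $S$ on $2^{n-1}+n$ vertices so the induction closes. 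Throughout, edge-disjointness is automatic within each copy by the inductive hypothesis, between the two copies because $V(AQ^0_{n-1})\cap V(AQ^1_{n-1})=\emptyset$, and across the cross-edge layer because each of the $2^n$ edges of $E^h_n\cup E^c_n$ is assigned to at most one tree.

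**Main obstacle.** The delicate part is not the bookkeeping on edge counts but maintaining the \emph{strengthened} inductive invariant: I need the leftover edges at level $n-1$ to form not just any subgraph but a tree on a predictable vertex set, positioned so that its two copies $S^0,S^1$ together with the unused cross edges and the unused internal edges can be knit into one spanning tree of $AQ_n$ while \emph{again} leaving behind a tree of the right size. Getting the explicit labelling of which cross edges go to which $\widehat{T}_i$ — and verifying acyclicity of $\widehat{T}_{n-1}$, where cycles are the real danger since one is combining two trees with many extra edges — is where the construction has to be pinned down concretely rather than sketched. I expect to handle this by fixing, once and for all, a canonical vertex (say $0^n$ and $10^{n-1}$) as the "hub" through which each spliced tree and the final tree attach, so that every cross edge used is incident to a controlled set of vertices and acyclicity can be read off from the hub structure.
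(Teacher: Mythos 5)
Your overall skeleton agrees with the paper in two important respects: the strengthened inductive invariant (the edges left over by the edge-disjoint spanning trees of $AQ_{n-1}$ form a tree $S$ on $2^{n-2}+n-1$ prescribed vertices, and this must be carried forward), and the construction of the first batch of trees by splicing the two copies $T_i^0,T_i^1$ with a single cross edge each. Where you diverge from the paper — and where the proposal has a genuine gap — is in how the \emph{extra} spanning tree is produced. You splice \emph{all} $n-2$ pairs and then try to manufacture the $(n-1)$st spanning tree of $AQ_n$ out of $S^0$, $S^1$ and the cross edges alone. Note first that there are no ``unused internal edges'': the $n-2$ trees together with $S$ partition $E(AQ_{n-1})$ exactly, so the only raw material for $\widehat{T}_{n-1}$ \emph{and} for the new leftover tree is $S^0\cup S^1\cup(E^h_n\cup E^c_n)$. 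The cross-edge graph $E^h_n\cup E^c_n$ is $2$-regular and decomposes into $2^{n-2}$ vertex-disjoint $4$-cycles of the form $\{0x,\,1x,\,0\bar{x},\,1\bar{x}\}$. Consequently: (i) any such $4$-cycle disjoint from $V(S^0)\cup V(S^1)$ is an isolated component of the available material, so $\widehat{T}_{n-1}$ cannot be spanning unless $V(S^0)\cup V(S^1)$ meets every $4$-cycle — an extra invariant you neither state nor maintain; and (ii) the new leftover must be a tree with $2^{n-1}+n-1$ edges, but since $S^0\cup S^1$ spans only $2^{n-1}+2n-2$ of the $2^n$ vertices, the leftover is forced to consist largely of cross edges, and no connected acyclic subgraph with more than $3$ edges lives inside a disjoint union of $4$-cycles. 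So the leftover can only be a tree if almost all of one of $S^0,S^1$ is deliberately withheld from $\widehat{T}_{n-1}$, which in turn makes $\widehat{T}_{n-1}$ even more dependent on the $4$-cycle structure. Your proposed fix (a canonical hub vertex) does not touch either difficulty.

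The paper's construction avoids all of this by treating the last pair of inherited spanning trees \emph{asymmetrically} rather than splicing it: one new spanning tree is $T^0_{n-1}$ together with \emph{all} complement edges $E^c$, so every vertex of the second copy hangs off it as a leaf (trivially spanning and acyclic); the other is $T^1_{n-1}$ together with some hypercube edges $\langle v^0_i,v^1_i\rangle$ and the \emph{copy-0} leftover tree $S^0$, glued on at the single vertex $v^0_n$. Only one of the two leftover trees is consumed; the new leftover is then the \emph{copy-1} leftover tree $S^1$ with the unused hypercube edges attached as pendant edges, which is manifestly a tree on $2^{n-1}+n$ vertices, so the invariant closes for free. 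In short: the increment in the number of trees should come from splitting one inherited pair into two spanning trees (one fed by the complement edges, one fed by $S^0$), not from conjuring a new spanning tree out of the two leftover trees and the cross-edge layer. As written, your step three would not go through.
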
 
     
\begin{proof} First if $n = 3$, we construct two EDSTs $T_1$ and $T_2$ as follows. See Fig.2.\\
	\begin{center}
\unitlength 0.5mm 
\linethickness{0.4pt}
\ifx\plotpoint\undefined\newsavebox{\plotpoint}\fi 
\begin{picture}(344.75,88.375)(0,0)
\put(9.291,21.041){\circle*{1.581}}
\put(112.041,21.041){\circle*{1.581}}
\put(45.791,21.041){\circle*{1.581}}
\put(149.291,21.041){\circle*{1.581}}
\put(9.791,53.54){\circle*{1.581}}
\put(113.291,53.54){\circle*{1.581}}
\put(47.041,53.54){\circle*{1.581}}
\put(150.541,53.54){\circle*{1.581}}
\put(4.25,57.75){\makebox(0,0)[cc]{$\tiny{000}$}}
\put(53,56.5){\makebox(0,0)[cc]{ $\tiny{001}$}}
\put(-1,14.75){\makebox(0,0)[cc]{$\tiny{010}$}}
\put(48,14.25){\makebox(0,0)[cc]{$\tiny{011}$}}
\put(159,16.75){\makebox(0,0)[cc]{$\tiny{111}$}}
\put(157.25,58.5){\makebox(0,0)[cc]{$\tiny{101}$}}
\put(104.75,55){\makebox(0,0)[cc]{$\tiny{100}$}}
\put(105,14){\makebox(0,0)[cc]{$\tiny{110}$}}
\put(9.75,53.5){\line(-1,0){.25}}
\put(9.25,21){\line(0,1){33.5}}
\put(8.5,54.5){\line(0,1){0}}
\put(9.25,21.25){\line(1,0){4.25}}
\put(12.75,21.25){\line(1,0){33}}
\put(45.75,21.25){\line(0,1){0}}
\put(45.75,21.25){\line(0,1){0}}
\qbezier(8.75,20.5)(9,22.875)(9.25,21.75)
\qbezier(9.25,21.75)(9,22)(8.75,21.25)
\qbezier(8,21.25)(74,42.25)(112,21.25)
\multiput(112,21.25)(.03941908714,.03371369295){964}{\line(1,0){.03941908714}}
\qbezier(45.75,21)(111.875,50.625)(149.5,20.75)
\qbezier(112.75,53.25)(65.625,88.375)(10,54)
\qbezier(150.5,54.25)(101.75,86.125)(47,54.5)
\put(75.25,1.75){\makebox(0,0)[cc]{Fig.2(a). Spanning tree $T_1$ in $AQ_3$}}
\put(194.291,22.041){\circle*{1.581}}
\put(298.541,20.041){\circle*{1.581}}
\put(231.541,22.041){\circle*{1.581}}
\put(335.041,22.041){\circle*{1.581}}
\put(195.541,54.54){\circle*{1.581}}
\put(299.041,54.54){\circle*{1.581}}
\put(232.791,54.54){\circle*{1.581}}
\put(336.291,54.54){\circle*{1.581}}
\put(190,58.75){\makebox(0,0)[cc]{$\tiny{000}$}}
\put(238.75,57.5){\makebox(0,0)[cc]{ $\tiny{001}$}}
\put(184.75,15.75){\makebox(0,0)[cc]{$\tiny{010}$}}
\put(233.75,15.25){\makebox(0,0)[cc]{$\tiny{011}$}}
\put(344.75,17.75){\makebox(0,0)[cc]{$\tiny{111}$}}
\put(343,59.5){\makebox(0,0)[cc]{$\tiny{101}$}}
\put(290.5,56){\makebox(0,0)[cc]{$\tiny{100}$}}
\put(290.75,15){\makebox(0,0)[cc]{$\tiny{110}$}}
\put(195.5,54.5){\line(1,0){36.75}}
\put(232.25,54.5){\line(0,-1){33.25}}
\multiput(231.25,22.25)(.06920326864,.03370786517){979}{\line(1,0){.06920326864}}
\put(297.75,28){\line(0,-1){.25}}
\put(299,54.5){\line(-1,0){.25}}
\put(298.5,20.75){\line(0,1){33}}
\put(298.5,55){\line(1,0){37.75}}
\multiput(299,54.75)(.03765690377,-.03373430962){956}{\line(1,0){.03765690377}}
\multiput(194.25,22.25)(.046979866,.033557047){149}{\line(1,0){.046979866}}
\multiput(201.25,27.25)(.0396039604,.03372524752){808}{\line(1,0){.0396039604}}
\put(263.25,4.25){\makebox(0,0)[cc]{Fig.2(b). Spanning tree $T_2$ in $AQ_3$}}
\end{picture}
	\end{center}

Edges uncovered by $T_1$ and $T_2$ again form a tree say $T_3$ on $7$ vertices. See Fig.3.\\
\begin{center}
\unitlength 0.6mm 
\linethickness{0.4pt}
\ifx\plotpoint\undefined\newsavebox{\plotpoint}\fi 
\begin{picture}(185.5,58.25)(0,0)
\put(35.041,20.791){\circle*{1.581}}
\put(138.541,20.791){\circle*{1.581}}
\put(72.291,20.791){\circle*{1.581}}
\put(175.791,20.791){\circle*{1.581}}
\put(36.291,53.29){\circle*{1.581}}
\put(139.791,53.29){\circle*{1.581}}
\put(175.291,53.54){\circle*{1.581}}
\put(73.541,53.29){\circle*{1.581}}
\put(30.75,57.5){\makebox(0,0)[cc]{$\tiny{000}$}}
\put(79.5,56.25){\makebox(0,0)[cc]{ $\tiny{001}$}}
\put(25.5,14.5){\makebox(0,0)[cc]{$\tiny{010}$}}
\put(74.5,14){\makebox(0,0)[cc]{$\tiny{011}$}}
\put(185.5,16.5){\makebox(0,0)[cc]{$\tiny{111}$}}
\put(183.75,58.25){\makebox(0,0)[cc]{$\tiny{101}$}}
\put(131.25,54.75){\makebox(0,0)[cc]{$\tiny{100}$}}
\put(131.5,13.75){\makebox(0,0)[cc]{$\tiny{110}$}}
\multiput(35.5,53.5)(.0343406593,-.0336538462){364}{\line(1,0){.0343406593}}
\multiput(48,41.25)(.0394308943,-.0337398374){615}{\line(1,0){.0394308943}}
\put(72.25,20.5){\line(0,1){0}}
\multiput(72.25,20.5)(-.03125,.15625){8}{\line(0,1){.15625}}
\multiput(73.5,53.25)(.06716804979,-.03371369295){964}{\line(1,0){.06716804979}}
\put(138.25,21.25){\line(1,0){37.5}}
\put(175.5,21.5){\line(0,1){14.25}}
\put(175.5,35.75){\line(0,1){18}}
\put(175.5,53.75){\line(0,1){0}}
\put(103,1.25){\makebox(0,0)[cc]{Fig.3. Tree $T_3$ in $AQ_3$}}
\qbezier(34.5,21.5)(61.375,31.75)(174.75,54)
\qbezier(35.75,53)(152.875,38.625)(175.5,20.75)
\end{picture}
\end{center}

Let $AQ_{n+1}$ be decomposed into two augmented cubes say $AQ^0_n$ and $AQ^1_n$ with
vertex set say $ \{u^0_i, v^0_i : 1 \leq i \leq 2^{n-1}\}$ and  $\{u^1_i, v^1_i : 1 \leq i \leq 2^{n-1}\}$ respectively. 
Denote by $T^0_1, T^0_2,.......,T^0_{(n-1)}$  the EDSTs in $AQ^0_n$. Let the identical corresponding EDSTs
in $AQ^1_n$ be denoted by $T^1_1, T^1_2,.......,T^1_{(n-1)}$. Let the vertices of the tree say 
$T^0_n$ which is made up of uncovered edges of $n-1$ EDSTs in $AQ^0_n$ be denoted 
by $u^0_1, u^0_2,....u^0_{2^{n-1}}, v^0_1, v^0_2,....v^0_n$ and the identical corresponding 
vertices of the corresponding tree say $T^1_n$ in $AQ^1_n$ be denoted by
$u^1_1, u^1_2,....u^1_{2^{n-1}}, v^1_1, v^1_2,....v^1_n$.

We want to construct $n$ EDSTs in $AQ_{n+1}$, of which first $n-2$ EDSTs are constructed from 
$T^0_i$ and $T^1_i$ by adding a hypercube edge $\langle v^0_i, v^1_i \rangle \in E(AQ_{n+1})$ to 
connect two internal vertices $v^0_i \in V(T^0_i)$ and $v^1_i \in V(T^1_i)$, for $1\leq i \leq n-2$. See Fig.4.\\
\begin{center}
\unitlength 0.6mm 
\linethickness{0.4pt}
\ifx\plotpoint\undefined\newsavebox{\plotpoint}\fi 

\end{center}

The $n^{th}$ EDST is constructed from $T^1_{(n-1)}$ by adding to it  hypercube edges 
$\langle v^0_i, v^1_i \rangle \in E(AQ_{n+1}) $ to connect internal vertices
$v^0_i \in V(AQ^0_n)$ and $v^1_i \in V(T^1_{(n-1)})$ $n \leq i \leq 2^{n-1}$. 
The $2^{n-1}+ (n-1)$ vertices in $AQ^0_n$ which are not connected to $T^1_{(n-1)}$ via 
hypercube edges are $u^0_i$ ($1 \leq i \leq 2^{n-1}$) and $ v^0_1, v^0_2,....v^0_{n-1}$, 
but the edge $\langle v^0_{n}, v^1_{n} \rangle$ is included in $T^1_{(n-1)}$, by adding 
to $T^1_{(n-1)}$ the edges of the tree $T^0_n$ through the vertex $v^0_{n}$ connect all 
remaining vertices $ v^0_1, v^0_2,....v^0_{n-1}$ and $u^0_i$ ($1 \leq i \leq 2^{n-1}$) to $T^1_{(n-1)}$. See Fig.6. \\
\begin{center}
	\unitlength 0.6mm 
	\linethickness{0.4pt}
	\ifx\plotpoint\undefined\newsavebox{\plotpoint}\fi 

	
\end{center}

Still, we have uncovered edges of $AQ_{n+1}$ namely edges of tree $T^1_n$, the 
hypercube edge $\langle v^0_{n-1}, v^1_{n-1} \rangle$ and hypercube 
edges $\langle u^0_i, u^1_i \rangle$ ($1 \leq i \leq 2^{n-1}$). We can easily 
observe that these uncovered hypercube edges along with the tree $T^1_n$ again
form a new tree on $(2^{n-1}+ n) + (2^{n-1} +1) = 2^n + n+1$ vertices. See Fig.7. \\
\begin{center}
\unitlength 0.6mm 
\linethickness{0.4pt}
\ifx\plotpoint\undefined\newsavebox{\plotpoint}\fi 

           	
\end{center}

 \end{proof}

\noindent {\bf Acknowledgment:} The author gratefully
acknowledges the Department of Science and Technology, New Delhi, India
for the award of Women Scientist Scheme for research in Basic/Applied Sciences.

$$\diamondsuit\diamondsuit\diamondsuit$$

\end{document}